\pdfoutput=1 
\documentclass[12pt]{amsart}

\usepackage{enumerate}

\usepackage[active]{srcltx}
\usepackage{nicefrac}

\usepackage{pgf}
\setlength{\textwidth}{165mm}
\setlength{\textheight}{215mm}
\setlength{\parindent}{8mm}
\setlength{\oddsidemargin}{0pt}
\setlength{\evensidemargin}{0pt}
\setlength{\topmargin}{0pt}
\usepackage{tikz}
\usetikzlibrary{arrows,automata}
\usepackage{graphicx}
\usetikzlibrary{shapes,patterns,calc,snakes}

\usepackage{amsthm,amssymb,setspace}
\usepackage[pagebackref,colorlinks,linkcolor=red,citecolor=blue,urlcolor=blue,hypertexnames=true]{hyperref}
\usepackage{amsrefs}
\usepackage[matrix, arrow]{xy}

\setcounter{tocdepth}{1}

\renewcommand{\deg}{{\rm deg}}

\newcommand{\A}{\mathcal A}

\newcommand{\N}{\mathbb N}
\newcommand{\R}{\mathbb R}

\newcommand{\B}{\mathcal B}

\theoremstyle{plain}
\newtheorem*{theorem*}{Theorem}
\newtheorem{theorem}{Theorem}[section]

\newtheorem{lemma}[theorem]{Lemma}

\theoremstyle{definition}
\newtheorem*{definition*}{Definition}
\newtheorem{definition}[theorem]{Definition}

\theoremstyle{remark}

\newtheorem{remark}[theorem]{Remark}
\newtheorem{example}[theorem]{Example}


\begin{document}

\onehalfspace

\title{Positivstellens\"atze for Quantum Multigraphs}

\author{Tim Netzer}
\address{Tim Netzer, Universit\"at Leipzig, Germany}
\email{netzer@math.uni-leipzig.de}

\author{Andreas Thom}
\address{Andreas Thom, Universit\"at Leipzig, Germany}
\email{thom@math.uni-leipzig.de}


\begin{abstract}Studying inequalities between subgraph- or homomorphism-densities is an important topic in graph theory. Sums of squares techniques have proven useful in dealing with such questions. Using an approach from real algebraic geometry, we strengthen a Positivstellensatz for simple quantum graphs by Lov\'asz and Szegedy, and we prove several new Positivstellens\"atze for nonnegativity of quantum  multigraphs. We provide new examples and counterexamples.
\end{abstract}

\maketitle

\tableofcontents

\section{Introduction} Let $F,G$ be finite undirected graphs without multiple edges or loops (all graphs in the first part of this paper are of this type). A homomorphism is a mapping $\varphi\colon V_F\rightarrow V_G$ defined on vertices, which preserves the adjacency relation, i.e. whenever $ij\in E_F$ is an edge in $F$, then $\varphi(i)\varphi(j)\in E_G$ is an edge in $G$. The {\it homomorphism density} $t(F,G)$ of $F$ in $G$ is the probability that a randomly chosen map $\varphi\colon V_F\rightarrow V_G$ is a homomorphism. So if ${\rm hom}(F,G)$ denotes the number of homomorphisms, then $$t(F,G)=\frac{{\rm hom}(F,G)}{\vert V_G\vert ^{\vert V_F\vert}}.$$ The {\it subgraph density} $t_{\rm inj}(F,G)$ is closely related; it is the probability that a random {\it injective} map is a homomorphism, i.e. $$t_{\rm inj}(F,G)= \frac{{\rm inj}(F,G)}{\vert V_G\vert\cdot  (\vert V_G\vert -1)\cdots (\vert V_G\vert -\vert V_F\vert +1)},$$ where ${\rm inj}(F,G)$ is the number of injective homomorphisms. With $F$ fixed and the number of vertices of $G$ growing, $t(F,G)$ and $t_{\rm inj}(F,G)$ coincide asymptotically, as for example shown in \cite{lose1}.  Since these densities are often studied in the context of very large graphs $G$, information about any of the two densities also contains some information about the other. We will mostely be concerned with the homomorphism density $t(\cdot,\cdot)$ in this paper. 

One is interested in the possible values that can occur as homomorphism densities, and the relations between them. In other words, given graphs $F_1,\ldots,F_n$, one wants to understand the set  $$\left\{ (t(F_1,G),\ldots,t(F_n,G))\mid G \mbox{ graph }\right\}\subseteq \R^n$$ (see \cite{bochloso} Section 7.3 for a nice picture in the case $n=2, F_1=K_2, F_2=K_3$).
A way of doing this is looking at {\it polynomial inequalities} between homomorphism densities. Given a polynomial $p\in \R[x_1,\ldots,x_n]$, one is interested in the question whether $$p(t(F_1,G),\ldots,t(F_n,G))\geq 0$$ holds {\it for all graphs} $G$, i.e. whether $p$ is nonnegative on the above set.  Note that the homomorphism density is multiplicative in the first component, meaning that $$t(F_1\sqcup F_2,G)=t(F_1,G)\cdot t(F_2,G),$$ where $F_1\sqcup F_2$ denotes the disjoint union of graphs $F_1$ and $F_2$. 
So (after changing the $F_i$)  we can restrict to {\it linear} inequalities: given graphs $F_1,\ldots,F_n$ and $c_1,\ldots,c_n\in\R$, does  $$\sum_{i=1}^n c_i\cdot t(F_i,G)\geq 0$$ hold for all $ G?$  
\begin{definition}\label{defqu}
(1) A {\it quantum graph} is a formal linear combination of graphs, with real coefficients: $a=\sum_{i=1}^n c_iF_i.$

(2) A quantum graph $a=\sum_i c_iF_i$ is called {\it nonnegative} if $t(a,G):=\sum_i c_i\cdot  t(F_i,G)\geq 0$ holds for all graphs $G$. \end{definition}

\begin{example}\label{quex}
(1)  The following quantum graph is nonnegative:  

\medskip
 \begin{center}
\begin{tikzpicture}[-,>=stealth',shorten >=1pt,auto,node distance=2.8cm,
                    semithick]
  \tikzstyle{every state}=[fill,draw=none,text=white,scale=0.2]

  \node[state] (A)                    {};
  \node[state]         (B) [below left of=A] {};
  \node[state] (C) [below right of =A]{};

  \path (A) edge           node {} (B)
         edge   []        node {} (C)
               ;

  \node[state]         (D) at (2,0) {};
   \node[state]         (E) [below  of=D] {};
\node[state] (F) [right of = D]{};
\node[state] (G) [below of = F]{};
\path
		(D) edge   []        node {} (E)
  		 (F) edge node {} (G);             
              
    \put (30,-10){$-$}

\put (-50,-8){$a=$}
               \end{tikzpicture}
\end{center}

\noindent
This is shown in \cite{lo}, using an easy sums of squares approach; see Example \ref{ex1} below for more details.

\noindent
(2) The following quantum graph is also nonnegative; we will prove this in Example \ref{rob} below:
$$\begin{tikzpicture}[-,>=stealth',shorten >=1pt,auto,node distance=2.8cm,
                    semithick]
  \tikzstyle{every state}=[fill,draw=none,text=white,scale=0.2]

  \node[state] (A)                    {};
  \node[state]         (B) [below left of=A] {};
  \node[state] (C) [below right of =A]{};

  \path (A) edge           node {} (B)
         edge   []        node {} (C)
         (B) edge (C)
               ;

   \node[state] (D)        at (2.3,0)            {};
  \node[state]         (E) [below left of=D] {};
  \node[state] (F) [below right of =D]{};

  \path (D) edge           node {} (E)
         edge   []        node {} (F)
               ;

     \put (30,-10){$-2$}

\put (-50,-8){$b=$}

\node[state] (G) at (4,0){};
\node[state] (H) [below of =G] {};
\path (G) edge (H);
\put (95,-10){$+$}
               \end{tikzpicture}$$

\noindent
(3) The computation $c:=b+2a$ results in the following quantum graph, whose nonnegativity is known as Goodman's Theorem:

$$\begin{tikzpicture}[-,>=stealth',shorten >=1pt,auto,node distance=2.8cm,
                    semithick]
  \tikzstyle{every state}=[fill,draw=none,text=white,scale=0.2]

  \node[state] (A)                    {};
  \node[state]         (B) [below left of=A] {};
  \node[state] (C) [below right of =A]{};

  \path (A) edge           node {} (B)
         edge   []        node {} (C)
         (B) edge (C)
               ;

  \node[state]         (D) at (2,0) {};
   \node[state]         (E) [below  of=D] {};
\node[state] (F) [right of = D]{};
\node[state] (G) [below of = F]{};
\path
		(D) edge   []        node {} (E)
  		 (F) edge node {} (G);             
              
    \put (30,-10){$-2$}

\put (-50,-8){$c=$}

\node[state] (G) at (4,0){};
\node[state] (H) [below of =G] {};
\path (G) edge (H);
\put (95,-10){$+$}
               \end{tikzpicture}$$
\noindent  
 This is precisely the statement that the polynomial $y-2x^2+x$ is nonnegative on the set $$\{(t(K_2,G),t(K_3,G))\mid G \mbox{ graph}\}\subseteq \R^2.$$
  \end{example}

Nonnegativity of quantum graphs is examined in numerous recent papers. It is  in general an undecidable problem \cite{hano}, but sums of squares techniques have proven useful in attacking it \cite{lose2}. An extensive account of this topic (any many related others) can be found in the very nice book \cite{lo}.   

Our contribution is the following. By putting the existing sums of squares techniques into a bit more conceptual setting of real algebraic geometry, we  simplify and slightly strengthen the Positivstellensatz from \cite{lose2}. This is done in Section \ref{simple}. Our main results are Theorem \ref{main}, Theorem \ref{weakpos} and Theorem \ref{three} in Section \ref{multisec}, all Positivstellens\"atze for quantum {\it multigraphs}. We obtain new examples, using results from real algebraic geometry.

\section{Simple graphs}\label{simple}

In this section, every graph is finite, undirected and without multiple edges or loops. We start by explaining the setup of graph algebras and graph parameters. Let us emphasize that hardly any of the results in this section is new; the concepts have been introduced and used by several authors before (see for example \cite{frlosc,hano,lose1,lose1.5,lose2}  and also  \cite{lo} for a thorough overview). Our approach will however simplify some of the proofs, and will most notably allow us to extend the results to the multigraph setup in the next section.

 A {\it k-labeled} graph is a graph where $k$ different vertices are labeled from $1$ to $k$ (a $0$-labeled graph is an unlabeled graph). Let $\mathcal G_k$ denote the set of isomorphism classes of $k$-labeled graphs, where isomorphisms are supposed to respect the labeling. If $F,G$ are $k$-labeled graphs, then the product $$F*_k G$$ is defined as first taking the disjoint union of $F$ and $G$, then identifying vertices with the same label, and finally reducing possible edge  multiplicities to one. So for $0$-labeled graphs it is just the disjoint union. This multiplication turns $\mathcal G_k$ into an abelian monoid, having the graph $E_k$ with vertices $1,\ldots,k$ and no edges as its identity element.

The {\it $k$-th graph algebra} $\mathcal A_k$ is the monoid algebra of $\mathcal G_k$ over $\R$, i.e. it has, as a vector space, the elements of $\mathcal G_k$ as a basis: $$\A_k=\left\{ \sum_{G\in\mathcal G_k} \alpha_G \cdot G \mid  \alpha_G\in\R, \mbox{ almost all } \alpha_G=0\right\}.$$ The multiplication of $\mathcal G_k$ extends by distributivity, making $\mathcal A_k$ a commutative algebra. Note that elements of $\A_0$ are precisely quantum graphs as in Definition \ref{defqu}. 

We can equip $\mathcal A_k$ with a grading, by defining $$\deg(G):= \vert V_G\vert -k$$  (i.e. counting the unlabeled vertices) for $G\in \mathcal G_k$ and setting $$\A_k^d:=\left\{ \sum_{\deg(G)=d} \alpha_G \cdot G\right\}. $$ We obtain $$\A_k=\bigoplus_{d\geq 0} \A_k^d$$ and the multiplication is compatible with this direct-sum-decomposition, in the usual way. We will often work with the degree zero part $\A_k^0$ only. It is a finite dimensional and real reduced algebra (i.e. $0$ is a sum of squares only in the trivial way), in fact the quotient of the polynomial algebra $\R[z_{ij}\mid 1\leq i<j\leq k]$ by the ideal generated by $z_{ij}^2-z_{ij}$. Here we identify a monomial $$z^e=z_{12}^{e_{12}} \cdots z_{23}^{e_{23}} \cdots $$ (where $e_{ij}\in\{0,1\}$) with the graph having an edge  between the vertices labeled $i$ and $j$ if and only if $e_{ij}=1$. The variety  corresponding to $\A_k^0$ is finite and consists only of real points: $$\mathcal V(\A_k^0)=\{0,1\}^{k \choose 2}.$$   From this it is clear that the set of sums of squares  $\Sigma^2 \A_k^0$ in $\A_k^0$ coincides with the set of elements which are nonnegative as polynomial functions on $\mathcal V(\A_k^0)$.

To a graph in $\mathcal G_k$ we can add a new isolated vertex labeled $k+1$, and obtain a graph  in $\mathcal G_{k+1}.$ This injective monoid-homomorphism $\boxplus\colon\mathcal G_k \rightarrow \mathcal G_{k+1}$ extends to an embedding of graded algebras $\boxplus\colon\mathcal A_k\rightarrow\mathcal A_{k+1}.$

A {\it graph parameter }Êis a mapping $t\colon \mathcal G_0 \rightarrow \R,$ i.e. a rule that assigns a real number to each (unlabeled) graph. By ignoring the labels one can extend $t\colon\mathcal G_k\rightarrow \R$ for all $k$, and thus obtain linear functionals $t\colon \A_k\rightarrow \R.$
\begin{definition}A graph parameter $t$ is called \begin{itemize} 
\item  {\it isolate indifferent} if the value at a graph does not change when adding an isolated vertex; equivalently, if $t$ is compatible with the mappings $\boxplus$. 
\item {\it reflection positive} if $t(a^2)\geq 0$ holds for all $a\in\A_k$ and all $k$. 
\item {\it flatly reflection positive} if $t(a^2)\geq 0$ holds for all $a\in\A_k^0$ and all $k$. 
\end{itemize}\end{definition}

\noindent
We list some important observations and results:\begin{itemize}
\item For any graph $G$, the homomorphism density $t(\cdot, G)$ defines an   isolate indifferent and reflection positive graph parameter. The first property is obvious, the second follows for example from Remark \ref{denspos} below. 
\item Every isolate indifferent and reflection positive graph parameter is a conic combination  of limits of homomorphism densities $t(\cdot,G).$  This is shown in \cite{lose2}. So nonnegativity of quantum graphs as in Definition \ref{defqu} could also be defined as nonnegativity at each isolate indifferent and reflection positive graph parameter!
\item An isolate indifferent and flatly reflection positive graph parameter is automatically reflection positive. This is also shown in \cite{lose2}. So nonnegativity of quantum graphs as in Definition \ref{defqu} could also be defined as nonnegativity at each isolate indifferent and flatly reflection positive graph parameter!
\end{itemize}

\noindent
Now there is an obvious way to prove nonnegativity of a quantum graph $a$: if it coincides with a sum of squares from some $\A_k$ (after removing the labels and possibly adding or removing isolated vertices), then $a$ is nonnegative. 

\begin{example}\label{ex1} This example is taken from \cite{lo}. The quantum graph  

\medskip
 \begin{center}
\begin{tikzpicture}[-,>=stealth',shorten >=1pt,auto,node distance=2.8cm,
                    semithick]
  \tikzstyle{every state}=[fill,draw=none,text=white,scale=0.2]

  \node[state] (A)                    {};
  \node[state]         (B) [below left of=A] {};
  \node[state] (C) [below right of =A]{};

  \path (A) edge           node {} (B)
         edge   []        node {} (C)
               ;

  \node[state]         (D) at (2,0) {};
   \node[state]         (E) [below  of=D] {};
\node[state] (F) [right of = D]{};
\node[state] (G) [below of = F]{};
\path
		(D) edge   []        node {} (E)
  		 (F) edge node {} (G);             
              
    \put (30,-10){$-$}

\put (-50,-8){$a=$}
               \end{tikzpicture}
\end{center}

\noindent
is nonnegative, since it coincides up to labels and isolated vertices with the following square in $\A_1$:

\begin{center}
\begin{tikzpicture}[-,>=stealth',shorten >=1pt,auto,node distance=2.8cm,
                    semithick]
  \tikzstyle{every state}=[fill,draw=none,text=white,scale=0.2]

  \node[state] (A)                  {};
  \node[draw,shape=circle,scale=0.2]         (B) [below  of=A] {\huge $1$};

  \path (A) edge           node {} (B)
                     ;

  \node[state]         (D) at (2,0) {};
   \node[state]         (E) [below  of=D] {};
\node[draw,shape=circle,scale=0.2] (F) [left of = E]{\Huge $1$};
\path
		(D) edge   []        node {} (E)
  		;             
              
    \put (20,-10){$-$}

\path (-0.2,-0.9) edge [bend left =30] (-0.2,0.4);
\path (2.2,-0.9) edge [bend right =30] (2.2,0.4) ;
\put (70,5){$2$};
               \end{tikzpicture} 
\end{center}
\end{example}

\noindent
The Positivstellensatz from \cite{lose2} states that any nonnegative quantum graph arises in this way, {\it up to an arbitrarily small error $\epsilon$ in the $\ell_1$-norm of coefficients}. Note that \cite{lo2} provides a Positivstellensatz without errors, using infinite sums of squares instead.
We give a new proof for the following  strong approximation result:
\begin{theorem}\label{mainsimple} A quantum graph $a$  is nonnegative if and only if for all $\epsilon >0$ there is some $k$ and a sum of squares $\sigma\in\Sigma^2 \A_k^0,$ such that $a+\epsilon $ and $\sigma$ coincide up to labels and isolated vertices. 
\end{theorem}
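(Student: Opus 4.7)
The ``if'' direction is immediate: the first bullet above tells us each $t(\cdot,G)$ is reflection positive and isolate indifferent, so if $a+\epsilon$ matches some $\sigma\in\Sigma^2\A_k^0$ up to labels and isolated vertices, then $t(a+\epsilon,G)=t(\sigma,G)\ge 0$ for every graph $G$; letting $\epsilon\to 0^+$ gives $t(a,G)\ge 0$.

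For the ``only if'' direction, I would run a Hahn--Banach separation argument driven by the two structural bullets above the theorem. Pass to the quotient $\tilde\A_0$ of $\A_0$ by ``add or remove an isolated vertex,'' and let $C\subseteq\tilde\A_0$ be the convex cone generated by the classes of $u(\sigma)$, where $u\colon\A_k\to\A_0$ strips labels and $\sigma$ ranges over $\bigcup_{k\ge 0}\Sigma^2\A_k^0$. The key observation is that any nonzero linear functional $\ell$ on $\tilde\A_0$ with $\ell\ge 0$ on $C$ defines a graph parameter $t_\ell$ that is isolate indifferent (because $\ell$ factors through $\tilde\A_0$) and flatly reflection positive (because $\ell\ge 0$ on each $u(\Sigma^2\A_k^0)$). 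The third bullet then upgrades $t_\ell$ to reflection positivity, and the Lov\'asz--Szegedy representation (the second bullet) yields $t_\ell=\sum_i c_i\lim_n t(\cdot,G_{i,n})$ with $c_i\ge 0$ not all zero. Two consequences: (i) nonnegativity of $a$ forces $t_\ell(a)\ge 0$, so no such $\ell$ strictly separates $a$ from $\overline C$, and therefore $a\in\overline C$; (ii) $t_\ell(1)=\sum_i c_i>0$, so the class of the empty graph $1$ lies in the relative interior of $\overline C$.

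Combining these, for every $a\in\overline C$ and every $\epsilon>0$ the element $a+\epsilon\cdot 1$ sits in $\mathrm{ri}(\overline C)=\mathrm{ri}(C)\subseteq C$, which is exactly the desired conclusion. The main obstacle is precisely that $C=\bigcup_k u(\Sigma^2\A_k^0)$ is an increasing union of closed cones (each $\Sigma^2\A_k^0$ is closed because $\mathcal V(\A_k^0)=\{0,1\}^{\binom{k}{2}}$ is finite) that need not itself be closed, so plain separation would only give $a\in\overline C$ and not $a+\epsilon\in C$. Bridging this gap is done entirely by observation (ii): the empty graph is an interior direction of $\overline C$, which is what makes the $+\epsilon$ form of the error term (rather than an $\ell_1$ perturbation of coefficients as in \cite{lose2}) the right one.
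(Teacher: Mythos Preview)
Your argument is essentially the same as the paper's: both run a Hahn--Banach/duality argument in a suitable ``label-free, isolate-free'' vector space, showing that (i) the nonnegative $a$ lies in the closure of the sums-of-squares cone, and (ii) the unit $1$ is an interior point of that cone, which together force $a+\epsilon$ into the cone itself. Your quotient space $\tilde\A_0$ and cone $C=\bigcup_k u(\Sigma^2\A_k^0)$ are canonically isomorphic to the paper's direct limit $\mathcal B^0=\varinjlim \mathcal B_k^0$ and cone $\mathcal C=\bigcup_k \mathcal C_k$ (the Reynolds operator $r$ implements the identification), so the frameworks match.

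The one point where your route differs, and where you should be a bit careful, is in establishing (ii). You deduce ``$1\in\mathrm{ri}(\overline C)$'' from the fact that every nonzero $\ell\ge 0$ on $C$ satisfies $\ell(1)>0$, via the Lov\'asz--Szegedy representation. In infinite dimensions this dual characterization of interior points presupposes that $\overline C$ has nonempty interior; without that, the implication can fail. The paper avoids this by proving the interior property \emph{directly}: since $\mathcal V(\A_k^0)=\{0,1\}^{\binom{k}{2}}$ is finite, $\Sigma^2\A_k^0$ is exactly the cone of nonnegative functions on that finite set, so $1$ is an interior point of each $\mathcal C_k$ inside $\mathcal B_k^0$, hence of $\mathcal C$ inside $\mathcal B^0$. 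Once you know this (which also makes your final identity $\mathrm{ri}(\overline C)=\mathrm{ri}(C)$ unproblematic), your argument and the paper's coincide; the paper then simply invokes the isolation theorem for convex sets with nonempty interior to conclude $b+\epsilon\in\mathcal C$. So your use of the representation theorem for (ii) is an unnecessary detour—the algebraic fact about $\A_k^0$ already gives the interior point and is what actually drives the improvement from an $\ell_1$-error to a plain $+\epsilon$.
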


The proof of the theorem becomes quite easy, if we equip our graph algebras with some more structure. So first note that permutation of the labels yields an operation $S_k\curvearrowright \mathcal G_k$ of the symmetric group $S_k$ by automorphisms  on $\mathcal G_k$. 
This operation extends to an operation by graded algebra automorphisms on $\mathcal A_k$. We denote by $\mathcal{B}_k$ the set of invariant elements of this action.  $\mathcal B_k$ is a graded subalgebra of $\mathcal A_k$, and the inclusion $\mathcal B_k\subseteq \mathcal A_k$ admits a left-inverse $\mathcal B_k$-module homomorphism \begin{align*}r\colon \mathcal A_k&\rightarrow \mathcal B_k \\ a&\mapsto \frac{1}{\vert S_k\vert}\sum_{\sigma\in S_k} a^\sigma\end{align*} which respects the grading, the {\it Reynolds operator}.  As a vector space, $\mathcal B_k$ is spanned by the elements $r(G)$ with $G\in\mathcal G_k.$  $\mathcal B_k^0\subseteq \A_k^0$ is a  subalgebra, which is clearly also finite dimensional and real reduced. The variety of $\mathcal B_k^0$ consists of finitely many points which are all real, and to the inclusion $\mathcal B_k^0\subseteq\mathcal \A_k^0$ there corresponds a surjective  polynomial mapping $\mathcal V(\A_k^0)\rightarrow \mathcal V(\mathcal B_k^0)$.
We also obtain  injective linear maps $$\boxplus_r:= r\circ \boxplus \colon \mathcal B_k\rightarrow \mathcal B_{k+1}$$ making the following diagram commutative: $$\xymatrix{\cdots  \ar@{->}^{\boxplus}[r] & \A_k  \ar@{->}^{r}[d] \ar@{->}^{\boxplus}[r] & \A_{k+1}  \ar@{->}^{r}[d]  \ar@{->}^{\boxplus}[r] & \cdots\\ \cdots  \ar@{->}^{\boxplus_r}[r] & \mathcal B_k   \ar@{->}^{\boxplus_r}[r] & \mathcal B_{k+1}  \ar@{->}^{\boxplus_r}[r] & \cdots}$$ Note that the mappings $\boxplus_r$ are just linear, not multiplicative; they are however  compatible with the grading on $\mathcal B_k$, and we often consider the degree zero part of the above diagram only. We denote by $\mathcal B^0$ the direct limit of the chain $$\cdots\rightarrow \mathcal B_k^0\rightarrow\mathcal B_{k+1}^0\rightarrow\cdots$$ in the category of $\R$-vector spaces.
We next consider $$\mathcal C_k:= r(\Sigma^2 \A_k^0)=\Sigma^2\A_k^0\cap\mathcal B_k^0.$$ From the fact that the mapping $\mathcal V(\A_k^0)\rightarrow\mathcal V(\mathcal B_k^0)$ is surjective we see that $\mathcal C_k$ is the set of nonnegative functions on $\mathcal V(\mathcal B_k^0)$, and thus also coincides with $\Sigma^2\mathcal B_k^0$ (a fact which is not true for Reynolds operators of group actions in general!). Clearly, $1$ is an interior point of the convex cone $\mathcal C_k$ in $\mathcal B_k^0$, meaning that $1+\epsilon b$ belongs to $\mathcal C_k$, for each $b\in \mathcal B_k^0$ and $\epsilon>0$ small enough. We have $\boxplus_r(\mathcal C_{k})\subseteq \mathcal C_{k+1}$.  In the direct limit $\mathcal B^0$ we obtain the convex cone $\mathcal C:=\bigcup_k \mathcal C_k,$ of which $1$ is also an interior point.
Since a graph parameter $t$ ignores labels, it factors through $\B_k$ via $r$: $$\A_k\stackrel{r}{\rightarrow}\mathcal B_k\stackrel{t}{\rightarrow}\R.$$ 

\begin{lemma} \label{help}  A family of linear functionals $\varphi\colon \mathcal B_k^0\rightarrow \R$ (for all $k\geq 0$) comes from a flatly reflexion positive and isolate indifferent graph parameter if and only if it is compatible with the embeddings $\boxplus_r$ and satisfy $\varphi(\mathcal C_k)\geq 0$ for all $k$ (equivalently, if it comes from a linear functional on $\mathcal B^0$ which is nonnegative on $\mathcal C$).\end{lemma}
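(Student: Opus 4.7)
The strategy is to prove the two implications separately. For the forward direction, given a flatly reflection positive and isolate indifferent graph parameter $t$, I set $\varphi_k := t|_{\B_k^0}$. Label-invariance of $t$ gives $t \circ r = t$ on each $\A_k$, while isolate indifference reads $t \circ \boxplus = t$; combined with $\boxplus_r = r \circ \boxplus$, these yield $\varphi_{k+1} \circ \boxplus_r = \varphi_k$. Since $\mathcal C_k \subseteq \Sigma^2 \A_k^0$, flat reflection positivity forces $\varphi_k(\mathcal C_k) \geq 0$.

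For the converse, the main task is to reconstruct a graph parameter from $\{\varphi_k\}_k$. The key observation is that any $G \in \mathcal G_0$ with $n$ vertices, under an arbitrary labeling $\tilde G \in \mathcal G_n$ of its vertices, gives an element $r(\tilde G) \in \B_n^0$ that does not depend on the chosen labeling, since $r$ averages over $S_n$. I therefore set
$$t(G) := \varphi_n\bigl(r(\tilde G)\bigr)$$
and extend $\R$-linearly to $t\colon \A_0 \to \R$. The compatibility $r \circ \boxplus = \boxplus_r \circ r$, which is immediate from the commutative diagram in the excerpt, together with the assumed $\varphi_{k+1}\circ\boxplus_r = \varphi_k$, yields isolate indifference:
$$t(G \sqcup \bullet) = \varphi_{n+1}\bigl(r(\boxplus(\tilde G))\bigr) = \varphi_{n+1}\bigl(\boxplus_r(r(\tilde G))\bigr) = \varphi_n\bigl(r(\tilde G)\bigr) = t(G).$$

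It remains to check that the extension of $t$ to each $\A_k$ by ignoring labels is flatly reflection positive and recovers $\varphi_k$ on $\B_k^0$. On the spanning elements $r(H)$ with $H \in \mathcal G_k$, $|V_H|=k$, label-invariance and the definition of $t$ give $t(r(H)) = t(H) = \varphi_k(r(H))$, so that $t|_{\A_k^0} = \varphi_k \circ r$; in particular $t|_{\B_k^0} = \varphi_k$. For $a \in \A_k^0$, we then obtain $t(a^2) = \varphi_k(r(a^2))$, and since $r(a^2) \in r(\Sigma^2 \A_k^0) = \mathcal C_k$, this is nonnegative by hypothesis. The parenthetical reformulation is the universal property of the direct limit: a family compatible with $\boxplus_r$ is precisely a linear functional on $\B^0 = \varinjlim \B_k^0$, and nonnegativity on each $\mathcal C_k$ coincides with nonnegativity on $\mathcal C = \bigcup_k \mathcal C_k$.

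The main subtlety is the converse: $t$ must be defined on arbitrary unlabeled graphs from data given only on the finite-dimensional pieces $\B_k^0$, and these definitions must cohere for all vertex counts. The two hypotheses are exactly tailored to this task -- $\boxplus_r$-compatibility ensures isolate indifference (so the value is insensitive to adding labeled isolated vertices before averaging), while nonnegativity on each $\mathcal C_k$ yields flat reflection positivity via the identity $t(a^2) = \varphi_k(r(a^2))$ with $r(a^2) \in \mathcal C_k$.
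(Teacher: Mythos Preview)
Your proof is correct and supplies exactly the details the paper leaves out; the paper's own proof consists solely of the phrase ``Easy exercise.'' The approach you take---restricting $t$ to each $\B_k^0$ in one direction, and in the other reconstructing $t$ via $t(G):=\varphi_{|V_G|}(r(\tilde G))$ and verifying $t|_{\A_k^0}=\varphi_k\circ r$---is the natural one implicit in the paper's setup of the Reynolds operator and the commutative diagram for $\boxplus_r$.
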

\begin{proof} Easy exercise. \end{proof}

\begin{proof}[Proof of Theorem \ref{mainsimple}] One direction is clear. For the other, let $a$ be a nonnegative quantum graph. Choose some $\tilde a\in\A_d^0$ which coincides with $a$ up to isolated vertices when all labels are removed. Then $b:=r(\tilde a)\in \mathcal B_d^0$ also coincides with $a$ up to isolated vertices and labels, and is thus nonnegative at each isolate indifferent, flatly reflection positive graph parameter.  In view of Lemma \ref{help}, it belongs to the double dual of $\mathcal C$ in $\mathcal B^0$, and the isolation theorem for convex sets with nonempty interior (see for example \cite{cimane}, Proposition 1.3 for this standard fact) implies $b+\epsilon \in \mathcal C$ for all $\epsilon>0$. Since $\mathcal B^0$ is the direct limit of the $\mathcal B_k^0$ and $\mathcal C=\bigcup_k \mathcal C_k$, this proves the claim.
\end{proof}

\begin{remark}
The proof even shows that if $a$ is {\it strictly} positive at each nontrivial, isolate indifferent and reflection positive graph parameter, then $a$ coincides with a sum of squares from some $\A_k^0$ {\it without error} (see again \cite{cimane}, Proposition 1.3).
\end{remark}

One can ask whether the $\epsilon$ in Theorem \ref{mainsimple} is really necessary. It is in fact, as was shown in \cite{hano};  there exist nonnegative quantum graphs which do not coincide up to labels and isolated vertices with a sum of squares from some $\A_k^0$ or even $\A_k.$ We sketch the idea from \cite{hano}.

 For two graphs $F,G$ we consider a parametrized version of $t(F,G).$ We assign a variable $x_w$ to each of the vertices $w$ of $G$, set $g:=\sum_{w\in V_G} x_w$ and define $$\widetilde t(F,G):=\frac{\sum_{\tiny \begin{array}{c}\varphi\colon V_F\rightarrow V_G\\ {\rm homomorphism}\end{array}} \prod_{v\in V_F}x_{\varphi(v)}} {g^{\vert V_F\vert}}.$$ Note that $\widetilde t(F,G)(1,\ldots,1)=t(F,G)$ is just the usual homomorphism density. Also note that $\widetilde t(\cdot ,G)$ is isolate indifferent. We thus obtain linear  maps  $$\widetilde t(\cdot,G)\colon\A_k\rightarrow \R\left[\frac{x_w}{g} \mid w\in V_G\right]$$ which are compatible with $\boxplus$.  Given $F\in \mathcal G_k$ and a mapping $\psi\colon [k] \rightarrow V_G$  there is a relative version $$\widetilde t_\psi (F,G):= \frac{\sum_{\tiny \begin{array}{c}\varphi\supseteq \psi\\ {\rm homomorphism}\end{array}} \prod_{v\in V_F\setminus [k]}x_{\varphi(v)}} {g^{\vert V_F\vert-k}}.$$ The relative version is multiplicative on $\mathcal G_k$, i.e. $$\widetilde t_\psi (F *_k F',G)=\widetilde t_\psi (F,G)\cdot \widetilde t_\psi (F',G)$$ holds; so $\widetilde t_\psi (\cdot ,G)\colon \A_k\rightarrow \R[x_w/g]$ maps sums of squares to sums of squares. On $\A_k$ we have $$\widetilde t(\cdot, G)= \sum_{\psi\colon [k] \rightarrow V_G} \widetilde t_\psi (\cdot,G) \cdot \frac{\prod_{i\in [k]} x_{\psi(i)}}{g^k}.$$ 
 
  \begin{definition}

 (1) A subset $P\subseteq R$ of a commutative ring $R$ is called a {\it preorder} if $P+ P\subseteq P$, $P\cdot P\subseteq P$ and $P$ contains all squares from $R$.
 
 \noindent
 (2) For $r_1,\ldots,r_m\in R$, the set $${\rm PO}(r_1,\ldots,r_m):=\left\{ \sum_{e\in\{0,1\}^m} \sigma_e\cdot r_1^{e_1} \cdots r_m^{e_m}\mid \sigma_e\in \Sigma R^2\right\}$$ is the smallest preorder containing $r_1,\ldots,r_m$. It is called the preorder {\it generated by} $r_1,\ldots,r_m$.
 \end{definition}
 
\noindent 
 So after clearing denominators in $\tilde t(\cdot,G)$ we get:
\begin{theorem}Let $a$ be a quantum graph, which coincides (after unlabeling and up to isolated vertices) with a sum of squares from some $\A_k$. Then there is some $N$ large enough, such that  for all graphs $G$ $$\left(\sum_{w\in V_G} x_w\right)^{N}\cdot \widetilde t(a,G)\in {\rm PO}(x_w\mid \omega \in V_G)\ \subseteq \   \R[x_w\mid \omega\in V_G].$$  If $a$ coincides with a sum of squares from $\A_k^0$, then $(\sum_{w\in V_G} x_w)^k\cdot \widetilde t(a,G)$ has nonnegative coefficients, for all graphs $G$.
\end{theorem}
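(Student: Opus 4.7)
The plan is to substitute a sum-of-squares representative for $a$ into the identity
$$\widetilde t(\cdot,G) \;=\; \sum_{\psi\colon[k]\to V_G}\widetilde t_\psi(\cdot,G)\cdot \frac{\prod_{i\in[k]} x_{\psi(i)}}{g^{k}}$$
recorded just before the theorem, and to exploit the multiplicativity of $\widetilde t_\psi(\cdot,G)$. First, since $\widetilde t(\cdot, G)$ manifestly ignores labels, and is isolate-indifferent because an additional isolated vertex contributes the same factor $g$ to numerator and denominator of $\widetilde t(F,G)$, we may replace $a$ by a sum of squares $\sigma=\sum_{i} b_i^{2}$ with $b_i\in\A_k$ (respectively $b_i\in \A_k^{0}$ in the degree-zero case).

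For the general claim, multiplicativity of $\widetilde t_\psi(\cdot, G)$ on $\mathcal G_k$ gives
$$\widetilde t_\psi(\sigma,G)\;=\;\sum_i \widetilde t_\psi(b_i,G)^{2},$$
which is a sum of squares in $\R[x_w/g]$. I would choose $D$ large enough that $g^D\cdot \widetilde t_\psi(b_i,G)\in\R[x_w]$ for all $i$ and $\psi$; the maximum of $|V_F|-k$ over graphs $F$ appearing in the $b_i$ suffices. Then $g^{2D}\cdot \widetilde t_\psi(\sigma,G)$ is a sum of honest polynomial squares in $\R[x_w]$, and multiplying the displayed identity through by $g^{k+2D}$ yields
$$g^{k+2D}\cdot \widetilde t(a,G)\;=\;\sum_{\psi}\bigl(g^{2D}\widetilde t_\psi(\sigma,G)\bigr)\cdot \prod_{i\in[k]} x_{\psi(i)}.$$
Each monomial $\prod_{i\in[k]} x_{\psi(i)}$ in the generators $x_w$ splits as a perfect square times a square-free product $\prod_{w\in S} x_w$ for some $S\subseteq V_G$, so every summand on the right lies in ${\rm PO}(x_w\mid w\in V_G)$. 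Taking $N:=k+2D$ proves the first statement.

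For the degree-zero case, if $\sigma\in\A_k^{0}$, then each $b_i\in \A_k^{0}$ is a linear combination of graphs $F\in \mathcal G_k$ with $V_F=[k]$; such $F$ have no unlabeled vertices, so $\widetilde t_\psi(F,G)$ is the indicator $[\psi\text{ is a homomorphism }F\to G]\in\{0,1\}$. Hence $\widetilde t_\psi(b_i,G)\in\R$, so $\widetilde t_\psi(\sigma,G)=\sum_i \widetilde t_\psi(b_i,G)^{2}\geq 0$ is a nonnegative real scalar, and the identity reduces to
$$g^{k}\cdot \widetilde t(a,G)\;=\;\sum_{\psi}\widetilde t_\psi(\sigma,G)\cdot \prod_{i\in[k]} x_{\psi(i)},$$
a polynomial in $\R[x_w]$ with nonnegative coefficients, as claimed. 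The whole argument is essentially mechanical; the only care needed is in bookkeeping the $g$-denominators and in observing that repeated factors inside the monomials $\prod_i x_{\psi(i)}$ can be absorbed into the sum-of-squares component, so no genuine obstacle arises beyond the setup already done prior to the theorem statement.
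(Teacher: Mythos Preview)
Your proof is correct and follows exactly the approach the paper intends: the paper's ``proof'' is the discussion preceding the theorem (multiplicativity of $\widetilde t_\psi$ and the decomposition $\widetilde t=\sum_\psi \widetilde t_\psi\cdot \prod_i x_{\psi(i)}/g^k$), with the theorem obtained ``after clearing denominators,'' and you have simply spelled out that denominator-clearing step carefully. Your bookkeeping with $N=k+2D$ and the observation that $\widetilde t_\psi$ takes scalar values on $\A_k^0$ are exactly the details the paper leaves implicit.
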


\begin{remark}\label{denspos}
The theorem shows that homomorphism densities $t(\cdot,G)$ are reflection positive. We have $t(\cdot,G)=\tilde t(\cdot ,G)(1,\ldots,1)$, and polynomials from the preorder generated by the $x_\omega$ are nonnegative at this point.
\end{remark}

\begin{example}

We have seen in Example \ref{ex1} that the following quantum graph comes from a  sum of squares in $\A_1:$

 $$\begin{tikzpicture}[-,>=stealth',shorten >=1pt,auto,node distance=2.8cm,
                    semithick]
  \tikzstyle{every state}=[fill,draw=none,text=white,scale=0.2]

  \node[state] (A)                    {};
  \node[state]         (B) [below left of=A] {};
  \node[state] (C) [below right of =A]{};

  \path (A) edge           node {} (B)
         edge   []        node {} (C)
               ;

  \node[state]         (D) at (2,0) {};
   \node[state]         (E) [below  of=D] {};
\node[state] (F) [right of = D]{};
\node[state] (G) [below of = F]{};
\path
		(D) edge   []        node {} (E)
  		 (F) edge node {} (G);             
              
    \put (30,-10){$-$}

\put (-50,-8){$a=$}
               \end{tikzpicture}$$ It is also shown in \cite{lo} that $a$ does not come from a sum of squares in some $\A_k^0.$ Here is another proof: for $G=K_2$ we compute $(x_1+x_2)^4\cdot \widetilde t(a,G)=(x_1-x_2)^2x_1x_2$, and this homogeneous polynomial has a zero in the interior of the positive orthant. It can thus clearly not have  the P\'olya property, i.e. multiplication with powers of $x_1+x_2$ will never lead to only nonnegative coefficients (see \cite{capore} for more details on the P\'olya property). 

\end{example}

The paper \cite{hano} uses the described method to show that there even exist nonnegative quantum graphs that are not sums of squares from any $\A_k.$
Now that we have explained the setup for simple graphs in some detail, we pass to multigraphs, and prove some new results.

\section{Multigraphs}\label{multisec}

In this section, a graph is still finite, undirected and loopless, but may now have multiple edges. Note that the case of loops and even directed edges is quite similar, and the results have straightforward extensions.

 We define $k$-labeled graphs and their multiplication as before, except that we don't erase multiple edges after multiplication. All structures as the graph algebras $\A_k$, $\A_k^0$, $\mathcal B_k,\mathcal B_k^0$ and the Reynolds operator $r$ can be defined just as  before. This time $\mathcal A_k^0$ is not finite dimensional, but $\A_k^0=\R[z_{ij}\mid 1\leq i<j\leq k]$ is the full polynomial algebra, and thus $\mathcal V(\A_k^0)=\R^{k\choose 2}$. The algebra $\mathcal B_k^0$ of $S_k$-invariants is finitely generated (by a standard result of Hilbert, see for example \cite{st} for a nice exposition), and to the embedding into $\A_k^0$ there corresponds a polynomial mapping $\mathcal V(\A_k^0)\rightarrow \mathcal V(\mathcal B_k^0).$ We denote by $\mathcal B^0$ the direct limit of the chain $$\cdots\rightarrow\mathcal B_k^0 \stackrel{\boxplus_r}{\rightarrow} \mathcal B_{k+1}^0\rightarrow\cdots $$ again in the category of vector spaces.

We again consider  $\mathcal C_k:= r(\Sigma^2 \A_k^0)=(\Sigma^2 \A_k^0) \cap \mathcal B_k^0$ and this is a preorder of $\mathcal B_k^0$, which is  now larger than $\Sigma^2\mathcal B_k^0$ in general. We still have $\boxplus_r(\mathcal C_{k})\subseteq \mathcal C_{k+1}$ and we obtain a convex cone $\mathcal C=\bigcup_k\mathcal C_k$ in $\mathcal B^0$.  
More general, let $\mathcal P_k\subseteq \A_k^0$ be an $S_k$-invariant preorder. Then $r(\mathcal P_k)=\mathcal P_k \cap \mathcal B_k^0$ is a preorder and a $\mathcal C_k$-module. If $\boxplus(\mathcal P_k)\subseteq \mathcal P_{k+1}$ holds, then also $\boxplus_r(r(\mathcal P_k))\subseteq r(\mathcal P_{k+1}).$  So $\mathcal B^0$ contains the convex cone $\mathcal P=\bigcup_k r(\mathcal P_k).$ If $1$ is an interior point of each $\mathcal P_k$ in $\A_k^0$ (recall this means $1+\epsilon a\in\mathcal P_k$ for all $a$ and $\epsilon$ small; this is sometimes also referred to as $\mathcal P_k$ being {\it archimedean}), then the same is true for $r(\mathcal P_k)$ in $\mathcal B_k^0$ and $\mathcal P$ in $\mathcal B^0$. 
We will mostly consider the preorders $$\mathcal P_k(d):= {\rm PO}(d\pm z_{ij}\mid 1\leq i<j\leq k)\subseteq \A_k^0,$$ of which $1$ is an  interior point (see \cite{ma} or \cite{pede}). The induced cone in $\mathcal B^0$ is denoted by $\mathcal P(d)$ in this case.

 Graph parameters and their properties are defined as before. Furthermore, a  graph parameter is called {\it $d$-bounded}, if $\vert t(K_2^k)\vert \leq d^k$ holds for all $k$, where $K_2^k$ is the graph with two vertices and $k$ edges  between them. With a suitable notion of homomorphism for multigraphs, the homomorphism density $t(\cdot,G)$ into a multigraph $G$ with edge-multiplicity at most $d$ is an example of such a $d$-bounded parameter.
The following Lemma is the straightforward extension of Lemma \ref{help} to the multigraph setting.

\begin{lemma}\label{help2}(1) A family of linear functionals $\varphi\colon \mathcal B_k^0\rightarrow \R$ comes from a flatly reflexion positive and isolate indifferent graph parameter if and only if it is compatible with the embeddings $\boxplus_r$ and satisfy $\varphi(\mathcal C_k)\geq 0$ for all $k$. Equivalently, if it comes from a linear functional  on $\mathcal B^0$ which is nonnegative on $\mathcal C.$

(2) The family comes from a $d$-bounded such parameter, if and only if it comes from a linear functional on $\mathcal B^0$ which is nonnegative on $\mathcal P(d).$\end{lemma}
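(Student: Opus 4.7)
Part (1) is the direct multigraph analogue of Lemma \ref{help}, and I would transcribe that proof verbatim: given a flatly reflection positive and isolate indifferent graph parameter $t$, one uses that $t$ ignores labels to factor it through the Reynolds operator as $t = \varphi\circ r$; isolate indifference translates to compatibility of $\varphi$ with the maps $\boxplus_r$, and flat reflection positivity $t(a^2)\geq 0$ for $a\in \A_k^0$ becomes $\varphi(\mathcal C_k)\geq 0$ via $\mathcal C_k = r(\Sigma^2 \A_k^0)$. The converse construction is symmetric. Since none of these steps rely on the finite-dimensionality of $\A_k^0$ present in the simple-graph case, they carry over to the multigraph setup unchanged.

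For part (2), granted part (1) it remains to show that $d$-boundedness of the parameter is equivalent to $\varphi$ being nonnegative on the additional preorder $\mathcal P(d)$. The reverse direction uses that $\mathcal P_k(d)$ is archimedean: Putinar's Positivstellensatz gives $d^m-z_{12}^m+\epsilon\in\mathcal P_k(d)$ for every $\epsilon>0$, since this polynomial is strictly positive on the cube $[-d,d]^{\binom{k}{2}}$. Applying the extension $\varphi\circ r$ and letting $\epsilon\downarrow 0$ yields $t(K_2^m)=\varphi(r(z_{12}^m))\leq d^m$; the analogous argument with $z_{12}^m+d^m+\epsilon\in\mathcal P_k(d)$ gives the matching lower bound $-d^m\leq t(K_2^m)$, so $|t(K_2^m)|\leq d^m$.

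The forward direction is the main work. First I would bootstrap the single-edge bounds $|\varphi(z_{ij}^m)|\leq d^m$ to uniform bounds $|\varphi(z^e)|\leq d^{|e|}$ for every monomial $z^e=\prod z_{ij}^{e_{ij}}$ in $\A_k^0$. This is done by iterated Cauchy--Schwarz on the positive semidefinite form $(f,g)\mapsto \varphi(fg)$, whose positivity is precisely flat reflection positivity: splitting $z^e$ into two factors, applying $|\varphi(fg)|^2\leq \varphi(f^2)\varphi(g^2)$, and recursing reduces everything to the pure-power moments $\varphi(z_{ij}^{2^N e_{ij}})\leq d^{2^N e_{ij}}$. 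With these moment bounds in hand, Nussbaum's theorem for the multivariate Hamburger moment problem yields a representing positive measure $\mu$ on $\R^{\binom{k}{2}}$. A standard marginal tail estimate $\mu(|z_{ij}|>d+\delta)\leq (d/(d+\delta))^{2N}$ forces $\mathrm{supp}(\mu)\subseteq [-d,d]^{\binom{k}{2}}$, so $\varphi\circ r=\int(\cdot)\,d\mu$ is nonnegative on every polynomial nonnegative on the cube, in particular on $\mathcal P_k(d)$; passing to the direct limit gives nonnegativity on $\mathcal P(d)$.

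The main obstacle is organizing the Cauchy--Schwarz bootstrapping cleanly enough to land on the tight exponent $d^{|e|}$ rather than a weaker power. Once that step is secured, the remaining invocations of Putinar, Nussbaum, and the marginal tail estimate are standard tools from real algebraic geometry and the theory of moment problems.
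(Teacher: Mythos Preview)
Your proposal is correct and follows the same logical chain as the paper: $d$-boundedness $\Leftrightarrow$ representing measure on $[-d,d]^{\binom{k}{2}}$ $\Leftrightarrow$ nonnegativity on $\mathcal P_k(d)$. The paper's proof is a two-line sketch that outsources both equivalences to citations (Theorem~2.2 of \cite{lose1.5} for the first, Schm\"udgen \cite{sch} for the second), whereas you unpack them by hand via Cauchy--Schwarz bootstrapping, Nussbaum's criterion, a marginal tail bound, and the archimedean Positivstellensatz; the self-contained route is longer but has the same content.
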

\begin{proof}Again an exercise. For (2) use the  fact the boundedness just means $\vert \varphi (z_{ij}^k)\vert \leq d^k$ for all $k,i,j$. This is equivalent to having representing measures on $[-d,d]^{k\choose 2}$ for all $k$ (by Theorem 2.2 in \cite{lose1.5} for example), and this is equivalent to being nonnegative on each  $\mathcal P_k(d),$ $r(\mathcal P_k(d))$ and $\mathcal P(d)$, respectively (by \cite{sch}). 
\end{proof}

The following is our first main theorem. Also compare to Theorem \ref{three} below, which provides a more complicated approximation, but avoids the preorder.
\begin{theorem}\label{main}
A quantum multigraph $a$ is nonnegative at each isolate indifferent, flatly reflection positive and $d$-bounded graph parameter if and only if for each $\epsilon >0$ there is some $k$ and some $\sigma\in \mathcal P_k(d)$, such that $a+\epsilon$ and $\sigma$ coincide up to labels and isolated vertices.
\end{theorem}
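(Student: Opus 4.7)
The plan is to mimic the proof of Theorem \ref{mainsimple}, with the sums-of-squares cone $\mathcal{C}$ replaced by the preorder $\mathcal{P}(d)$ in the direct limit $\mathcal{B}^0$, and with Lemma \ref{help2}(2) used in place of Lemma \ref{help}.

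The sufficiency direction is formal: if $a+\epsilon$ coincides up to labels and isolated vertices with some $\sigma\in\mathcal{P}_k(d)$, then for any isolate indifferent, flatly reflection positive and $d$-bounded graph parameter $t$, Lemma \ref{help2}(2) identifies $t$ (after factoring through $r$) with a linear functional on $\mathcal{B}^0$ that is nonnegative on $\mathcal{P}(d)$, hence nonnegative at $\sigma$. This yields $t(a)+\epsilon\, t(1)=t(\sigma)\geq 0$; letting $\epsilon\to 0$ gives $t(a)\geq 0$.

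For the converse, I would start with $a$ nonnegative at every such parameter and pick a lift $\tilde a$ to some $\mathcal{A}_m^0$ that coincides with $a$ up to isolated vertices after unlabeling, then set $b:=r(\tilde a)\in\mathcal{B}_m^0$. This $b$ coincides with $a$ up to labels and isolated vertices. Viewing $b$ as an element of the direct limit $\mathcal{B}^0$, Lemma \ref{help2}(2) converts the nonnegativity hypothesis on $a$ into the assertion that every linear functional on $\mathcal{B}^0$ nonnegative on $\mathcal{P}(d)$ takes a nonnegative value at $b$; equivalently, $b$ lies in the double dual of $\mathcal{P}(d)$. Since each $\mathcal{P}_k(d)$ is archimedean in $\mathcal{A}_k^0$ (cited to \cite{ma,pede} in the excerpt), its image $r(\mathcal{P}_k(d))$ is archimedean in $\mathcal{B}_k^0$, and therefore $1$ is an interior point of $\mathcal{P}(d)$ in $\mathcal{B}^0$. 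The isolation theorem for convex cones with nonempty interior (\cite{cimane}, Proposition 1.3) then delivers $b+\epsilon\in\mathcal{P}(d)$ for every $\epsilon>0$.

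Because $\mathcal{P}(d)=\bigcup_k r(\mathcal{P}_k(d))$ with $r(\mathcal{P}_k(d))\subseteq \mathcal{P}_k(d)$, the relation $b+\epsilon\in\mathcal{P}(d)$ produces some $k$ and some $\sigma\in\mathcal{P}_k(d)$ whose image in $\mathcal{B}^0$ equals that of $b+\epsilon$; unwinding the direct-limit identifications along the maps $\boxplus_r=r\circ\boxplus$ is exactly the statement that $a+\epsilon$ and $\sigma$ coincide up to labels and isolated vertices. The main obstacle is really the preparatory Lemma \ref{help2}(2), which needs a Schm\"udgen-type moment characterization (\cite{sch} together with \cite{lose1.5}) to convert the boundedness condition $|\varphi(z_{ij}^k)|\leq d^k$ into nonnegativity on $\mathcal{P}(d)$; granted that translation, the rest of the argument is a direct adaptation of Theorem \ref{mainsimple}, with the archimedean property of $\mathcal{P}_k(d)$ in the role previously played by $1$ being interior to $\mathcal{C}$.
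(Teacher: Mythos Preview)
Your proposal is correct and follows essentially the same route as the paper's proof: lift $a$ to some $b\in\mathcal B_m^0$ via the Reynolds operator, use Lemma~\ref{help2}(2) to place $b$ in the double dual of $\mathcal P(d)$, and then invoke the isolation theorem (using that $1$ is interior to $\mathcal P(d)$ by archimedeanity of the $\mathcal P_k(d)$) to get $b+\epsilon\in\mathcal P(d)=\bigcup_k r(\mathcal P_k(d))$. Your write-up is in fact slightly more explicit than the paper's in spelling out the sufficiency direction and the unwinding of the direct-limit identifications.
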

\begin{proof}
One direction is clear. For the other, let $a$ be nonnegative. Choose some $\tilde a\in\A_d^0$ which coincides with $a$ up to isolated vertices, when all labels are removed. Then $b:=r(\tilde a)\in \mathcal B_d^0$ also coincides with $a$ up to isolated vertices and labels, and is thus nonnegative at each isolate indifferent, flatly reflection positive and $d$-bounded  graph parameter.  In view of Lemma \ref{help2}, it belongs to the double dual of $\mathcal P(d)$ in $\mathcal B^0$, and the isolation theorem for convex sets with nonempty interior again  implies $b+\epsilon \in \mathcal P(d)$ for all $\epsilon>0$. Since $\mathcal B^0$ is the direct limit of the $\mathcal B_k^0$ and $\mathcal P(d)=\bigcup_k r(\mathcal P_k(d))$, this proves the claim.
\end{proof}

\begin{remark}
Again the proof shows that if $a$ is strictly positive at each nontrivial, isolate indifferent, flatly reflexion positive and $d$-bounded parameter, then $a$ comes from an element in some $\mathcal P_k(d)$ without error.
\end{remark}

It is maybe not very surprising that the error $\epsilon$ cannot be removed here as well. To see this, let  $F$ be a multigraph. For any $n\in\N$ we set $g=\sum_{i=1}^n x_i$ and define $$\tilde{t}(F,n):= \frac{\sum_{\varphi\colon V_F\rightarrow [n]} \prod_{v\in V_F}x_{\varphi(v)} \prod_{vw\in E_F} y_{\varphi(v)\varphi(w)}}{g^{\vert V_F\vert}}\in \R\left[\frac{x_i}{g},y_{ij}\mid 1\leq i\leq j\leq n\right].$$ This counts the number of vertex-edge-homomorphisms into the complete graph with vertex weights $x_i$ and edge weights $y_{ij}.$  Again $\tilde{t}(\cdot,n)$ is isolate indifferent and defines linear maps on all $\A_k,$ compatible with $\boxplus$.  
 For $F\in\mathcal G_k$ and $\psi\colon[k]\rightarrow [n]$ there is again a relative version $$\tilde{t}_\psi(F,n)=\frac{\sum_{\varphi\supseteq\psi} \prod_{v\in V_F\setminus[k]} x_{\varphi(v)} \prod_{vw\in E_F} y_{\varphi(v)\varphi(w)}}{g^{\vert V_F\vert -k}}$$ which is multiplicative on $\A_k$ and fulfills $$\tilde{t}(\cdot,n)=\sum_{\psi\colon[k]\rightarrow [n]} \tilde{t}_\psi(\cdot,n) \cdot \frac{\prod_{i=1}^k x_{\psi(i)}}{g^k}.$$
Note that $\tilde{t}_\psi(z_{ij},n)=y_{\psi(i)\psi(j)}$. After clearing denominators we obtain:

\begin{theorem}Let $a$ be a quantum multigraph which coincides up to labels and isolated vertices with an element $\sigma\in {\rm PO}(d\pm z_{ij}\mid 1\leq i<j\leq k)\subseteq \A_k.$ Then there is some $N\in\N$ such that for any $n\in\N$ we have $$\left(\sum_{i=1}^n x_i\right)^N\tilde{t}(a,n)\in {\rm PO}(x_i, d\pm y_{ij}\mid 1\leq i\leq j\leq n)\subseteq \R[x_i,y_{ij}].$$
If $a$ comes from an element of some $\mathcal P_k(d)\subseteq \A_k^0$, then in $g^k\cdot\tilde t(a,n)$  the coefficient of each monomial in $x$ is from ${\rm PO}(d\pm y_{ij})\subseteq \R[y_{ij}].$

\end{theorem}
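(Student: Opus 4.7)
The plan is to apply the relative maps $\tilde t_\psi(\cdot,n)$ to a preorder representation of $\sigma$ and reassemble the result via the identity
$$\tilde t(\cdot,n)=\sum_{\psi\colon [k]\to[n]} \tilde t_\psi(\cdot,n)\cdot\frac{\prod_{i=1}^k x_{\psi(i)}}{g^k}$$
recorded just before the theorem. Since $\tilde t(\cdot,n)$ is isolate indifferent and extends to $\A_k$ by ignoring labels, we have $\tilde t(a,n)=\tilde t(\sigma,n)$, so it suffices to analyze $\sigma$.

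The decisive point is that for each $\psi$ the map $\tilde t_\psi(\cdot,n)\colon\A_k\to\R[x_w/g,\,y_{ij}]$ is an $\R$-algebra homomorphism with $\tilde t_\psi(z_{ij},n)=y_{\psi(i)\psi(j)}$. In particular it carries $\Sigma^2\A_k$ into the sums of squares of the image ring and sends the generator $d\pm z_{ij}$ to $d\pm y_{\psi(i)\psi(j)}$. Writing $\sigma=\sum_e \tau_e\prod_{i<j}(d\pm z_{ij})^{e_{ij}}$ with each $\tau_e\in\Sigma^2\A_k$, this gives
$$\tilde t_\psi(\sigma,n)=\sum_e\tilde t_\psi(\tau_e,n)\prod_{i<j}\bigl(d\pm y_{\psi(i)\psi(j)}\bigr)^{e_{ij}},$$
in which every $\tilde t_\psi(\tau_e,n)$ is a sum of squares in the localization of $\R[x_w,y_{ij}]$ at $g$. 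Choosing an even exponent $2L$ large enough to clear all denominators occurring in the finitely many $\tilde t_\psi(\tau_e,n)$ makes $g^{2L}\tilde t_\psi(\tau_e,n)$ a genuine sum of squares in $\R[x_w,y_{ij}]$, so $g^{2L}\tilde t_\psi(\sigma,n)\in{\rm PO}(d\pm y_{ij}\mid 1\le i\le j\le n)$.

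Setting $N=2L+k$ and substituting into the decomposition yields
$$g^N\tilde t(\sigma,n)=\sum_\psi\bigl(g^{2L}\tilde t_\psi(\sigma,n)\bigr)\cdot\prod_{i=1}^k x_{\psi(i)}.$$
Each $\prod_i x_{\psi(i)}$ is a monomial with nonnegative integer exponents in the $x_i$ and hence lies in ${\rm PO}(x_i)$; multiplying it with a member of ${\rm PO}(d\pm y_{ij})$ produces an element of ${\rm PO}(x_i,d\pm y_{ij})$, and summing over $\psi$ gives the first assertion. For the second assertion, if $\sigma\in\mathcal P_k(d)\subseteq \A_k^0=\R[z_{ij}]$, then every $\tau_e$ already lies in $\Sigma^2\R[z_{ij}]$, so $\tilde t_\psi(\tau_e,n)\in\Sigma^2\R[y_{ij}]$ with no denominator at all; thus $\tilde t_\psi(\sigma,n)\in{\rm PO}(d\pm y_{ij})\subseteq\R[y_{ij}]$, and the identity $g^k\tilde t(\sigma,n)=\sum_\psi \tilde t_\psi(\sigma,n)\prod_i x_{\psi(i)}$ exhibits the coefficient of each $x$-monomial as a finite sum of such preorder elements.

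The only point requiring care is that $N$ must not depend on $n$. This is automatic: the denominators that arise in $\tilde t_\psi(\tau_e,n)$ are powers of $g$ whose exponents depend only on the vertex counts of the graphs appearing in the fixed preorder representation of $\sigma$, so a single $L$ (hence a single $N$) works simultaneously for all $n$ and all $\psi\colon[k]\to[n]$.
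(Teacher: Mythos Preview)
Your argument is correct and follows exactly the route the paper intends: the paper states the theorem immediately after the line ``After clearing denominators we obtain,'' so its proof is the preceding discussion---multiplicativity of $\tilde t_\psi$, the identity $\tilde t_\psi(z_{ij},n)=y_{\psi(i)\psi(j)}$, and the decomposition $\tilde t(\cdot,n)=\sum_\psi \tilde t_\psi(\cdot,n)\prod_i x_{\psi(i)}/g^k$---which you have spelled out in full. Your observation that the denominator exponent depends only on the vertex counts in the fixed representation of $\sigma$, hence $N$ is uniform in $n$, is a detail the paper leaves implicit.
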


It is often enough to  substitute $x_i=1/n$ and $y_{ii}=0$ and obtain an element of  the preorder ${\rm PO}(d\pm y_{ij}\mid 1\leq i<j\leq n).$

\begin{example}\label{rob} (1) We consider the Robinson quantum multigraph 

\medskip\begin{center}
\begin{tikzpicture}[-,>=stealth',shorten >=1pt,auto,node distance=2.8cm,
                    semithick]
  \tikzstyle{every state}=[fill,draw=none,text=white,scale=0.2]

  \node[state] (A)                    {};
  \node[state]         (B) [below of=A] {};

  \path (A) edge   [bend right=60]        node {} (B)
         edge   [bend right=15]        node {} (B)
         edge   [bend right=40]        node {} (B)
         edge   [bend left=15]        node {} (B)
         edge   [bend left=40]        node {} (B)
         edge   [bend left=60]        node {} (B)
      ;
   
  \node[state] (C)  at (2,0)              {};
  \node[state]         (D) [below left of=C] {};
   \node[state]         (E) [below right of=C] {};

\path (C) edge   []        node {} (D)
  		edge   [bend right=30]        node {} (D)
              (C) edge   []        node {} (E)
  		edge   [bend left=30]        node {} (E)
		(D) edge   []        node {} (E)
  		edge   [bend right=30]        node {} (E);

                \node[state] (F) at (4,0)                    {};
  \node[state]         (G) [below of=F] {};
   \node[state]         (H) [right of=G] {};

  \path (F)  edge   [bend right=15]        node {} (G)
         edge   [bend right=40]        node {} (G)
         edge   [bend left=15]        node {} (G)
         edge   [bend left=40]        node {} (G)
        (G)  edge   [bend right=20]        node {} (H)
      (G)  edge   [bend left=20]        node {} (H)

      ;

      \put (-40,-10){$a=$};    
   \put (20,-10){$+$};       
   \put (85,-10){$-2$};       
              \end{tikzpicture}
\end{center}

\noindent
which coincides (up to labels, isolated nodes and dividing by $3$) with the fully labeled  graph coming from the Robinson polynomial $$R=z_{12}^6+z_{13}^6+z_{23}^6 -(z_{12}^4z_{13}^2 +z_{12}^4z_{23}^2 +z_{12}^2z_{13}^4 +z_{13}^4z_{23}^2 +z_{12}^2z_{23}^4 +z_{13}^2z_{23}^4) +3z_{12}^2z_{13}^2z_{23}^3\in \A_3^0.$$ For details on the Robinson polynomial see \cite{re}. Since the Robinson polynomial is nonnegative on $\R^3$, we have $R+\epsilon \in \mathcal P_3(d)\subseteq \A_3^0$ for all values of $d$. This follows for example from the archimedean Positivstellensatz in \cite{sch}.   Thus $a$ is nonnegative at each $d$-bounded, flatly reflexion positive and isolate indifferent graph parameter. 

On the other hand, if we compute $\tilde t(a,3)$ and set $x_i=1/3$ as well as $y_{ii}=0$ for all $i$, then we obtain $R$ again (up to a positive multiple and in the variables $y_{ij}$ instead of $z_{ij}$). Since $R$ is homogeneous and not a sum of squares, it does also not belong to the preorder generated by $d\pm y_{ij}$ (compare the lowest degree parts in a possible representation). So  $a$ does not coincide up to labels and isolated nodes with an element from some ${\rm PO}(d\pm z_{ij})$ in $\A_k$ (and thus also not from some $\mathcal P_k(d)\subseteq \A_k^0$).

 Reducing all edge multiplicities in the Robinson example to one yields the simple quantum graph from Example \ref{quex} (2), and thus proves its nonnegativity. In particular, it implies Goodman's Theorem.

(2) Several generalizations of the Robinson polynomial appear under the name $H_\mu$ in \cite{chla}, Remark 2.5 and Proposition 2.7. They can be used to produce generalizations of the above example. For any odd integer $\mu$ we obtain the following nonnegative quantum graph, where the little numbers indicate the multiplicities of the simply drawn edge:

\bigskip
\begin{center}
\begin{tikzpicture}[-,>=stealth',shorten >=1pt,auto,node distance=2.8cm,
                    semithick]
  \tikzstyle{every state}=[fill,draw=none,text=white,scale=0.2]

  \node[state] (A)                    {};
  \node[state]         (B) [below of=A] {};

  \path (A) edge   []        node {} (B)
            ;
    \put (2,-10){\tiny $2\mu +4$}; 
  \node[state] (C)  at (2.5,0)              {};
  \node[state]         (D) [below left of=C] {};
   \node[state]         (E) [below right of=C] {};

\path (C) edge   []        node {} (D)
  		
              (C) edge   []        node {} (E)
  		edge   [bend left=30]        node {} (E)
		(D) edge   []        node {} (E)
  		edge   [bend right=30]        node {} (E);
       \put (55,-5){\tiny\bf $2\mu$}       
              
                \node[state] (F) at (4.5,0)                    {};
  \node[state]         (G) [below of=F] {};
   \node[state]         (H) [right of=G] {};

  \path (F)  edge   []        node {} (G)
               (G)  edge   [bend right=20]        node {} (H)
      (G)  edge   [bend left=20]        node {} (H)

      ;

      \put (-40,-10){$a=$};    
   \put (35,-10){$+$};       
   \put (95,-10){$-2$};       
    \put (130,-8){\tiny $2\mu +2$}; 
              \end{tikzpicture}
\end{center}

(3) Another related polynomial  appears under the name $h_4$ in \cite{chla}, Section 2. It gives rise to the following nonnegative quantum graph:

\medskip\begin{center}
\begin{tikzpicture}[-,>=stealth',shorten >=1pt,auto,node distance=2.8cm,
                    semithick]
  \tikzstyle{every state}=[fill,draw=none,text=white,scale=0.2]

  \node[state] (A)                    {};
  \node[state]         (B) [below of=A] {};

  \path (A) edge   [bend right=60]        node {} (B)
         edge   [bend right=15]        node {} (B)
         edge   [bend right=40]        node {} (B)
         edge   [bend left=15]        node {} (B)
         edge   [bend left=40]        node {} (B)
         edge   [bend left=60]        node {} (B)
      ;
   
  \node[state] (C)  at (2,0)              {};
  \node[state]         (D) [below left of=C] {};
   \node[state]         (E) [below right of=C] {};

\path (C) edge   [bend right=15]        node {} (D)
  		edge   [bend right=40]        node {} (D)
edge   [bend left=30]        node {} (D)
edge   [bend left=10]        node {} (D)

              (C) edge   []        node {} (E)
  		
		(D) edge   []        node {} (E)
  	;

                \node[state] (F) at (4,0)                    {};
  \node[state]         (G) [below of=F] {};
   \node[state]         (H) [right of=G] {};

  \path (F)  edge   [bend right=25]        node {} (G)
         edge   [bend right=45]        node {} (G)
         edge   [bend left=25]        node {} (G)
         edge   [bend left=45]        node {} (G)
edge   []        node {} (G)

        (G)  edge   []        node {} (H)

      ;

      \put (-40,-10){$a=$};    
   \put (20,-10){$+$};       
   \put (85,-10){$-2$};       
              \end{tikzpicture}
\end{center}

\end{example}

%
%
%
%
%
%
%

We proceed and want to prove another Positivstellensatz. Let us call a graph parameter $t$ {\it slowly growing}, if $$\sum_{i=0}^\infty \frac{1}{i!} t\left(K_2^{2i}\right)<\infty$$ where again $K_2^{j}$ is the graph with two vertices and $j$ edges between them.

\begin{theorem}\label{weakpos} A quantum multigraph $a$ is nonnegative at each isolate indifferent, flatly reflection positive and slowly growing graph parameter, if and only if for all $\epsilon >0$ there exists $r\in \N$  such that  $$a+\epsilon \sum_{i=0}^r \frac{1}{i!} K_2^{2i}$$ coincides with a sum of squares from $\A_r^0$, up to labels and isolated nodes.
\end{theorem}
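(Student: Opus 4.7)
The plan is to mirror the proof of Theorem \ref{main}, with the preorder $\mathcal{P}(d)$ replaced by the cone $\mathcal{C}$ augmented by perturbations in the direction of $u_r := \sum_{i=0}^{r} \frac{1}{i!} K_2^{2i}$ (which, after padding with isolated vertices and applying the Reynolds operator, I regard as an element of $\mathcal{B}_r^0$). The role that $d$-boundedness played in Theorem \ref{main}, namely supplying an archimedean preorder, will be taken by the slow-growth hypothesis, which is dually a uniform bound $\varphi(u_r) \leq M$ on any separating functional.

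For the easy direction I would argue directly: if $a + \epsilon u_r$ coincides up to labels and isolated nodes with some $\sigma \in \Sigma^2 \A_r^0$, then for any slowly growing, flatly reflection positive, isolate indifferent parameter $t$ we have $t(a) + \epsilon\sum_{i=0}^{r} t(K_2^{2i})/i! = t(a + \epsilon u_r) \geq 0$, whence $t(a) \geq -\epsilon M$ with $M := \sum_{i=0}^\infty t(K_2^{2i})/i! < \infty$. Sending $\epsilon \to 0$ gives $t(a) \geq 0$.

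For the converse I would set $b := r(\tilde a) \in \mathcal{B}^0$ as in the proof of Theorem \ref{main}. The first technical step is a direct analogue of Lemma \ref{help2}: a linear functional $\varphi$ on $\mathcal{B}^0$, compatible with the $\boxplus_r$ and nonnegative on $\mathcal{C}$, comes from a slowly growing, flatly reflection positive, isolate indifferent parameter if and only if $\sum_{i=0}^\infty \varphi(K_2^{2i})/i! < \infty$; this amounts to unwrapping definitions, since graph parameters are arbitrary maps $\mathcal{G}_0 \to \R$ and no further moment-representability is required. Next I introduce the convex cone
$$K := \bigcup_{r\in\N} \bigl\{ (c - \lambda u_r,\ \lambda) : c \in \mathcal{C}_r,\ \lambda \geq 0 \bigr\} \subseteq \mathcal{B}^0 \times \R.$$
Convexity follows from the identity $u_{r_2} - u_{r_1} = \sum_{i=r_1+1}^{r_2} K_2^{2i}/i! \in \mathcal{C}$ for $r_1 \leq r_2$, which lets one push any two generators of $K$ to a common level. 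The theorem's conclusion is then equivalent to $(b,\epsilon) \in K$ for every $\epsilon > 0$.

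To finish I would argue by contradiction: assume $(b,\epsilon_0) \notin K$ for some $\epsilon_0 > 0$ and separate by Hahn--Banach to obtain $(\varphi,\alpha)$ on $\mathcal{B}^0 \times \R$ with $(\varphi,\alpha)|_K \geq 0$ and $\varphi(b) + \alpha\epsilon_0 < 0$. Testing $K$ with $\lambda = 0$ forces $\varphi \geq 0$ on $\mathcal{C}$; with $c = \lambda u_r$ one obtains $\alpha \geq 0$; and with $c = 0,\ \lambda = 1$ one obtains $\varphi(u_r) \leq \alpha$ for all $r$, so $\sum_{i=0}^\infty \varphi(K_2^{2i})/i! \leq \alpha < \infty$. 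By the lemma analogue above, $\varphi$ then corresponds to a slowly growing, flatly reflection positive, isolate indifferent parameter, and $\varphi(b) < -\alpha\epsilon_0 \leq 0$ contradicts the nonnegativity hypothesis on $a$. The hard step will be justifying the separation: unlike $\mathcal{P}(d)$ in Theorem \ref{main}, the cone $K$ has no evident algebraic interior point in $\mathcal{B}^0 \times \R$, because $\mathcal{V}(\A_r^0) = \R^{\binom{r}{2}}$ is not compact. I would resolve this by performing the separation inside each finite-dimensional slice $\mathcal{B}_r^0 \times \R$, exploiting a moment-theoretic archimedean property of $u_r$ of Lasserre--Netzer type ($\lambda u_r + p \in \Sigma^2 \A_r^0$ for $\lambda$ sufficiently large, whenever $p$ has bounded degree), and then extracting a coherent limit $\varphi$ on $\mathcal{B}^0$ by a diagonal compactness argument across the growing slices.
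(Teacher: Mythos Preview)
Your outline is essentially the paper's own argument, repackaged. The paper also first reduces the ``if'' direction trivially, replaces $a$ by a symmetrized $b\in\mathcal B_d^0$, does separation in finite-dimensional truncations, invokes the Lasserre--Netzer bound to control the separating functionals, and extracts a limiting graph parameter (via an ultrafilter rather than a diagonal argument, but this is cosmetic). Your product-cone $K\subseteq\mathcal B^0\times\R$ with the $\epsilon$-coordinate built in is a clean repackaging of the paper's device of introducing an auxiliary bound $M$, showing $b\in\mathcal K_k(M)$ for some $k$, and then letting $M\to\infty$ to force the perturbation coefficient $c\binom{k}{2}\to 0$; the two formulations are dual to one another.

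One genuine slip to fix: you write ``performing the separation inside each finite-dimensional slice $\mathcal B_r^0\times\R$'', but in the multigraph setting $\mathcal A_r^0=\R[z_{ij}]$ is the full polynomial ring and $\mathcal B_r^0$ is \emph{not} finite-dimensional. The paper handles this by truncating in degree as well as in label count: it sets $V_k=\R[z_{ij}]_k$ (degree $\le k$), $\Sigma_k:=r(\Sigma^2 V_k)$, and works with the finite-dimensional cones
\[
\mathcal K_k(M)\;=\;\Sigma_k+\R_{\ge 0}\Bigl(\tbinom{k}{2}M-\sum_{i<j}\sum_{s=0}^{k}\tfrac{1}{s!}z_{ij}^{2s}\Bigr),
\]
coupling the degree bound to $k$. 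You seem aware of this (``whenever $p$ has bounded degree''), so in your scheme the correct move is to separate $(b,\epsilon_0)$ from $K\cap\bigl(r(V_k\cdot V_k)\times\R\bigr)$ at each stage. The paper also spends a sentence ensuring the separating functionals can be normalized to $\varphi_k(1)=1$ (using Lemma~4.3 of \cite{lane} to rule out $\varphi_k(1)=0$), which you will need for your compactness step; once normalized, the Lasserre--Netzer bound gives $|\varphi_s(c)|$ uniformly bounded for each fixed $c$, and the limit goes through.
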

\begin{proof} The ''if''-direction is clear. For the ''only if''-direction we can assume that $a$ is strictly positive at each normalized such parameter (i.e. $t(K_1)=1$), by adding some $\epsilon>0$ to $a$ first. 

We consider the finite dimensional subspace $V_k=\R[z_{ij}]_k\subseteq \A_k^0$ of polynomials of degree at most $k$, set $\Sigma^2 V_k=\left\{\sum_i c_i^2\mid c_i\in V_k\right\}$, and finally $$\Sigma_k:= r(\Sigma^2 V_k)\subseteq\mathcal B_k^0.$$ This is a convex cone in a finite dimensional subspace of $\mathcal B_k^0.$ For any fixed $M\geq 1$ we next consider  $$\mathcal K_k(M):=\Sigma_k+\R_{\geq 0}\left( {k\choose 2}\cdot M-\sum_{1\leq i<j\leq k} \sum_{s=0}^k \frac{1}{s!} z_{ij}^{2s}\right),$$ which is also a finite dimensional cone in  $\mathcal B_k^0$. We have $\boxplus_r(\mathcal K_k(M))\subseteq \mathcal K_{k+1}(M).$

As usual, we choose some $b\in\mathcal B_d^0$ that coincides with $a$ up to labels and isolated nodes. We then claim that $\boxplus_r(b)$ belongs to $\mathcal K_k(M)$, for some $k$ large enough. Indeed if it does not, there are  $\mathcal K_k(M)$-positive functionals $\varphi_k\colon\mathcal B_k^0\rightarrow \R$ with $\varphi_k(b)\leq 0$ for all $k$ large enough. We can ensure $\varphi_k(1)> 0$ (and thus $\varphi_k(1)=1$): if $b$ is not in the linear hull of $\mathcal K_k(M)$ then first choose $\varphi_k\equiv 0$ on $\mathcal K_k(M)$ and negative on $b$, then add some small multiple of the evaluation at the origin; otherwise choose $\varphi_k$ nontrivial on $\mathcal K_k(M)$, and use Lemma 4.3 from \cite{lane} to see that $\varphi_k(1)\neq 0$ is automatic. 

Again using Lemma 4.3 from \cite{lane}Ê one checks that $\varphi_s(\boxplus_r(c))$ remains bounded for each fixed $c$ from some $\mathcal B_k^0$ and all $s.$ Choosing a non-principal ultrafilter $\omega$ on $\N$ and setting $$\psi_k(c):=\lim_{s\to\omega}\varphi_s(c)$$ for all $k$ and $c\in\mathcal B_k^0$ defines a new compatible family of linear functionals, nonnegative on all $\mathcal C_k$. This family thus comes from a normalized, flatly reflection positive and isolate indifferent graph parameter $t$, which is obviously slowly growing, in fact $$\sum_{i=0}^\infty \frac{1}{i!} t\left(K_2^{2i}\right)\leq M$$ holds. We also have $t(a)\leq 0$, a contradiction.

What we have shown so far is that for each $M\geq 1$ there exists some $k$ large enough such that $b\in\mathcal K_k(M).$ This means we find a representation $$b +c\sum_{1\leq i<j\leq k}\sum_{s=0}^{k} \frac{1}{s!} z_{ij}^{2s}=\sigma +c{k\choose 2}M$$ with some $c\geq 0$ and $\sigma\in\Sigma_k.$ If we plug in $0$ for each $z_{ij}$ and let $M$ go to infinity, we see $c{k\choose 2} \to 0.$ This proves the claim.
\end{proof}

\begin{remark}(1) Theorem \ref{weakpos} gives an explicit $\ell_1$-norm approximation of $a$ via sums of squares. In this setup, the approximation cannot be strengthened to a simple "$+\epsilon$'' approximation, as we will see.

(2)
From the main result of \cite{la} we see that  a globally nonnegative polynomial $p\in\R[z_{ij}]=\A_k^0$ gives rise to a quantum graph that is nonnegative in the sense of Theorem \ref{weakpos}.

(3) Whether the perturbation is really necessary is checked as before; if $\tilde t(a,n)$ is not a sum of squares (after setting $x_i=1/n$ and $y_{ii}=0$ often), then $a$ does not coincide with a sum of squares from some $\A_k$.
\end{remark}

\begin{example}
The Robinson example $a$ from Example \ref{rob} (1) is nonnegative in the sense of Theorem \ref{weakpos}, since it comes from a globally nonnegative polynomial in $\A_3^0$. As argued before, neither $a$ nor $a+\epsilon$ is a sum of squares in some $\A_k$,  since $\tilde t(a,3)$ is the Robinson polynomial again.
\end{example}

In a similar fashion, we can prove the following variant of Theorem \ref{main}. We get a more complicated approximation, but avoid the preorder:

\begin{theorem}\label{three}
 A quantum multigraph $a$ is nonnegative at each isolate indifferent, flatly reflection positive and $d$-bounded graph parameter if and only if  for all $\epsilon>0$ there is some $r$ such that $$a+\epsilon  \left(1+\frac{1}{d^{2r}}K_2^{2r}\right)$$ coincides with a sum of squares from $\A_r^0$, up to labels and isolated nodes.
\end{theorem}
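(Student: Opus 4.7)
The ``if'' direction is immediate: for any normalized, isolate indifferent, flatly reflection positive and $d$-bounded graph parameter $t$ we have $t(1)=1$ (isolate indifference applied to the empty graph), $|t(K_2^{2r})|\leq d^{2r}$, and $t(\sigma)\geq 0$ for any sum of squares $\sigma$. So $t(a+\epsilon(1+K_2^{2r}/d^{2r}))\geq 0$ forces $t(a)\geq -2\epsilon$, and letting $\epsilon\to 0$ gives $t(a)\geq 0$.

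For the converse, I would mimic the architecture of the proof of Theorem \ref{weakpos}, replacing its slow-growth cone with a cone that encodes $d$-boundedness at level $k$. After adding a small positive constant, assume $a$ is strictly positive at every normalized such parameter, and choose a lift $b=r(\tilde a)\in\mathcal{B}_m^0$. Using the finite-dimensional subspace $V_k=\R[z_{ij}]_k\subseteq\A_k^0$ and $\Sigma_k:=r(\Sigma^2 V_k)\subseteq\mathcal{B}_k^0$ as before, introduce for each $M\geq 1$ the finite-dimensional convex cone
$$
\mathcal{K}_k(M):=\Sigma_k+\R_{\geq 0}\left(M-\tfrac{1}{d^{2k}}\,r(z_{12}^{2k})\right)\subseteq\mathcal{B}_k^0,
$$
which satisfies $\boxplus_r(\mathcal{K}_k(M))\subseteq\mathcal{K}_{k+1}(M)$. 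The central claim is that for every $M\geq 1$ there exists $k$ with $\boxplus_r^{k-m}(b)\in\mathcal{K}_k(M)$.

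To prove the claim, suppose it fails. Separating hyperplanes together with the normalization device from \cite{lane}, Lemma~4.3 yield $\mathcal{K}_k(M)$-positive functionals $\varphi_k$ with $\varphi_k(1)=1$ and $\varphi_k(\boxplus_r^{k-m}(b))\leq 0$ for arbitrarily large $k$. The cone condition gives $\varphi_k(z_{12}^{2k})\leq Md^{2k}$; combined with H\"older's inequality for the positive functional $\varphi_k$ this yields $\varphi_k(z_{12}^{2s})\leq M^{s/k}d^{2s}$ for $s\leq k$, and by Cauchy--Schwarz analogous bounds hold for any fixed mixed monomial uniformly in $k$. An ultrafilter limit therefore produces a compatible family $\psi$ of functionals on $\mathcal{B}^0$; since $M^{s/k}\to 1$ for fixed $s$ as $k\to\omega$ (and Cauchy--Schwarz handles odd moments) one gets $|\psi(K_2^{s})|\leq d^{s}$ for every $s$. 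By Lemma~\ref{help2}(2), $\psi$ comes from an isolate indifferent, flatly reflection positive, $d$-bounded parameter $t$ with $t(a)\leq 0$, contradicting strict positivity.

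Granted the claim, $\boxplus_r^{k-m}(b)\in\mathcal{K}_k(M)$ translates into $b+\frac{c}{d^{2k}}r(z_{12}^{2k})=\sigma+cM$ for some $\sigma\in\Sigma_k$ and $c\geq 0$. Evaluating at $z_{ij}=0$ gives $b(0)=\sigma(0)+cM$, and strict positivity at the ``empty graph'' parameter $t_0$ (which is $d$-bounded because $t_0(K_2^s)=0$ for $s\geq 1$) forces $b(0)>0$, whence $c\leq b(0)/M$. Choosing $M\geq b(0)/\epsilon$ makes $c\leq\epsilon$, and then
$$
b+\epsilon\left(1+\tfrac{1}{d^{2k}}r(z_{12}^{2k})\right)=(\sigma+cM)+\epsilon+\tfrac{\epsilon-c}{d^{2k}}r(z_{12}^{2k})\in\Sigma_k,
$$
since each summand lies in $\Sigma_k$ (a sum of squares, a nonnegative constant, and a nonnegative multiple of the Reynolds average of the square $(z_{12}^{k})^2$). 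Unlabeling and absorbing isolated nodes yields the desired representation with $r=k$. The main obstacle is the ultrafilter-limit step: unlike in Theorem~\ref{weakpos}, where the slow-growth cone simultaneously controls every even moment, here only the single moment $z_{12}^{2k}$ is bounded at level $k$, so propagating the bound to all lower moments and to mixed monomials requires careful use of H\"older and Cauchy--Schwarz together with the crucial observation that the slack factor $M^{s/k}$ tends to $1$ for fixed $s$ as $k\to\omega$.
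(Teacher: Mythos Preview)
Your approach is essentially the paper's: both follow the architecture of Theorem~\ref{weakpos} with the cone modified to $\Sigma_k+\R_{\geq 0}\bigl(\binom{k}{2}M-\sum_{i<j}z_{ij}^{2k}\bigr)$ (after the paper's rescaling to $d=1$; this agrees with your cone up to the normalisation of $M$), separate by a hyperplane, and pass to an ultrafilter limit. The one real difference lies in how the limiting parameter is shown to be $d$-bounded: the paper records $\psi_k(z_{ij}^{2r})\leq M$ for all $r$ and then invokes a moment theorem (Theorem~2.5 in Chapter~4 of \cite{bechre}) to obtain a representing measure on $[-1,1]^{\binom{k}{2}}$, whereas you use log-convexity of the even moments together with the limit $M^{s/k}\to 1$ to get $|\psi(K_2^s)|\leq d^s$ directly. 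Your route is more elementary and avoids the black box, which is a genuine (if modest) simplification. One correction: the nesting $\boxplus_r(\mathcal K_k(M))\subseteq\mathcal K_{k+1}(M)$ you assert is false here (unlike in Theorem~\ref{weakpos}), since $d^{-2k-2}z^{2k+2}-d^{-2k}z^{2k}$ is not a sum of squares; but you never actually use it---the argument only needs $\boxplus_r(\Sigma_k)\subseteq\Sigma_{k+1}$ together with your moment interpolation to propagate bounds down---so simply drop that claim.
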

\begin{proof} By scaling the edge-weights we can restrict to the case $d=1.$
The "if"-direction is clear. For the other direction we again assume that $a$ is strictly positive at each normalized such parameter (this is why we need $1$ in the approximation).  We proceed as in the proof of Theorem \ref{weakpos}, this time setting $$\mathcal K_k(M):=\Sigma_k+\R_{\geq 0}\left( {k\choose 2}\cdot M-\sum_{1\leq i<j\leq k} z_{ij}^{2k}\right).$$ Using Lemma 4.1 and Lemma 4.3 from \cite{lane} we obtain $b\in\mathcal K_k(M)$ for some $k$. Note that the functionals $\psi_k$ that we define as before fulfill $\psi_k(z_{ij}^{2r})\leq M$ on $\mathcal A_k^0$; by Theorem 2.5 in Chapter 4 of \cite{bechre} they have representing measures on $[-1,1]^{{k\choose 2}}$ and thus lead to a $1$-bounded parameter.
We obtain representations $$b+c\sum_{1\leq i<j \leq k}z_{ij}^{2k}= \sigma + c{k\choose 2}M$$ and again $c{k\choose 2}$ goes to zero for $M\to \infty$.\end{proof}

\begin{bibdiv} 
\begin{biblist}

\bib{bechre}{book} {,
    AUTHOR = {Berg, Ch.},
    AUTHOR={ Christensen, J. P. R.},
    AUTHOR={ Ressel, P.},
     TITLE = {Harmonic analysis on semigroups},
    SERIES = {Graduate Texts in Mathematics},
    VOLUME = {100},
  PUBLISHER = {Springer-Verlag},
   ADDRESS = {New York},
      YEAR = {1984},
     PAGES = {x+289},
 
}

\bib{bochloso}{article}{
   author={Borgs, C.},
   author={Chayes, J.},
   author={Lov{\'a}sz, L.},
   author={S{\'o}s, V. T.},
   author={Vesztergombi, K.},
   title={Counting graph homomorphisms},
   conference={
      title={Topics in discrete mathematics},
   },
   book={
      series={Algorithms Combin.},
      volume={26},
      publisher={Springer},
      place={Berlin},
   },
   date={2006},
   pages={315--371},
 }

\bib{capore}{article}{,
    AUTHOR = {M. Castle},
    AUTHOR={V. Powers},
    AUTHOR={B. Reznick},
     TITLE = {P\'olya's theorem with zeros},
   JOURNAL = {J. Symbolic Comput.},
  FJOURNAL = {Journal of Symbolic Computation},
    VOLUME = {46},
      YEAR = {2011},
    NUMBER = {9},
     PAGES = {1039--1048},
    }

\bib{chla} {article}{,
    AUTHOR = {Choi, M. D.},
    AUTHOR={ Lam, T. Y.},
     TITLE = {Extremal positive semidefinite forms},
   JOURNAL = {Math. Ann.},
  FJOURNAL = {Mathematische Annalen},
    VOLUME = {231},
      YEAR = {1977/78},
    NUMBER = {1},
     PAGES = {1--18},
  }

\bib{cimane}{article}{,
    AUTHOR = {Cimpri{\v{c}}, J.},
    AUTHOR={Marshall, M.},
    AUTHOR={Netzer, T.},
     TITLE = {Closures of quadratic modules},
   JOURNAL = {Israel J. Math.},
  FJOURNAL = {Israel Journal of Mathematics},
    VOLUME = {183},
      YEAR = {2011},
     PAGES = {445--474},
    
}

\bib{frlosc}{article}{,
    AUTHOR = {Freedman, M.},
    AUTHOR={Lov{\'a}sz, L.},
    AUTHOR={Schrijver, A.},
     TITLE = {Reflection positivity, rank connectivity, and homomorphism of
              graphs},
   JOURNAL = {J. Amer. Math. Soc.},
  FJOURNAL = {Journal of the American Mathematical Society},
    VOLUME = {20},
      YEAR = {2007},
    NUMBER = {1},
     PAGES = {37--51 (electronic)},
}

\bib{hano}{article}{,
    AUTHOR = {Hatami, H.},
    AUTHOR={Norine, S.},
     TITLE = {Undecidability of linear inequalities in graph homomorphism
              densities},
   JOURNAL = {J. Amer. Math. Soc.},
  FJOURNAL = {Journal of the American Mathematical Society},
    VOLUME = {24},
      YEAR = {2011},
    NUMBER = {2},
     PAGES = {547--565},
 
}

\bib{la}{article} {,
    AUTHOR = {Lasserre, J. B.},
     TITLE = {A sum of squares approximation of nonnegative polynomials},
   JOURNAL = {SIAM J. Optim.},
  FJOURNAL = {SIAM Journal on Optimization},
    VOLUME = {16},
      YEAR = {2006},
    NUMBER = {3},
     PAGES = {751--765},
   }

\bib{lane}{article}{,
    AUTHOR = {Lasserre, J. B.},
    AUTHOR ={Netzer, T.},
     TITLE = {S{OS} approximations of nonnegative polynomials via simple
              high degree perturbations},
   JOURNAL = {Math. Z.},
  FJOURNAL = {Mathematische Zeitschrift},
    VOLUME = {256},
      YEAR = {2007},
    NUMBER = {1},
     PAGES = {99--112},
   }

\bib{lo}{book}{,
    AUTHOR = {Lov{\'a}sz, L.},
     TITLE = {Large networks and graph limits},
    SERIES = {American Mathematical Society Colloquium Publications},
    VOLUME = {60},
 PUBLISHER = {American Mathematical Society},
   ADDRESS = {Providence, RI},
      YEAR = {2012},
     PAGES = {xiv+475},
 }

 \bib{lo2}{article}{,
    AUTHOR = {Lov{\'a}sz, L.},
     TITLE = {An error-free  {P}ositivstellensatz for graphs},
   JOURNAL = {Preprint},
       YEAR = {2013},

    }

\bib{lose1}{article}{,
    AUTHOR = {Lov{\'a}sz, L.} 
    AUTHOR ={Szegedy, B.}
   TITLE = {Limits of dense graph sequences},
   JOURNAL={Journal of Combinatorial Theory Series B}
     VOLUME = {96},
     NUMBER={6},
     PAGES = {933--957 },
      YEAR = {2006},
 }

 \bib{lose1.5}{article}{,
    AUTHOR = {Lov{\'a}sz, L.},
    AUTHOR={Szegedy, B.},
     TITLE = {The graph theoretic moment problem},
   JOURNAL = {arXiv:1010.5159v1},
      }

 \bib{lose2}{article}{,
    AUTHOR = {Lov{\'a}sz, L.},
    AUTHOR={Szegedy, B.},
     TITLE = {Random graphons and a weak {P}ositivstellensatz for graphs},
   JOURNAL = {J. Graph Theory},
  FJOURNAL = {Journal of Graph Theory},
    VOLUME = {70},
      YEAR = {2012},
    NUMBER = {2},
     PAGES = {214--225},
    }

\bib{ma}{book} {
    AUTHOR = {Marshall, M.},
     TITLE = {Positive polynomials and sums of squares},
    SERIES = {Mathematical Surveys and Monographs},
    VOLUME = {146},
 PUBLISHER = {American Mathematical Society},
   ADDRESS = {Providence, RI},
      YEAR = {2008},
     PAGES = {xii+187},
}


\bib{pede}{book} {,
    AUTHOR = {Prestel, A.}
    AUTHOR={Delzell, C. N.},
     TITLE = {Positive polynomials},
    SERIES = {Springer Monographs in Mathematics},
 PUBLISHER = {Springer-Verlag},
   ADDRESS = {Berlin},
      YEAR = {2001},
     PAGES = {viii+267},
 }

\bib{re}{incollection} {,
    AUTHOR = {Reznick, B.},
     TITLE = {Some concrete aspects of {H}ilbert's 17th {P}roblem},
 BOOKTITLE = {Real algebraic geometry and ordered structures ({B}aton
              {R}ouge, {LA}, 1996)},
    SERIES = {Contemp. Math.},
    VOLUME = {253},
     PAGES = {251--272},
 PUBLISHER = {Amer. Math. Soc.},
   ADDRESS = {Providence, RI},
      YEAR = {2000},
}

\bib{sch}{article} {,
    AUTHOR = {Schm{\"u}dgen, K.},
     TITLE = {The {$K$}-moment problem for compact semi-algebraic sets},
   JOURNAL = {Math. Ann.},
  FJOURNAL = {Mathematische Annalen},
    VOLUME = {289},
      YEAR = {1991},
    NUMBER = {2},
     PAGES = {203--206},

}

 \bib{st}{book} {,
    AUTHOR = {Sturmfels, B.},
     TITLE = {Algorithms in invariant theory},
    SERIES = {Texts and Monographs in Symbolic Computation},
   EDITION = {Second},
 PUBLISHER = {Springer},
      YEAR = {2008},
     PAGES = {vi+197},
 }

\end{biblist}
\end{bibdiv}

\end{document}